\newtheorem{thm}{Theorem}[section]
\newtheorem{lem}[thm]{Lemma}
\theoremstyle{definition}
\newtheorem{defn}[thm]{Definition}
\newtheorem{rem}[thm]{Remark}
\theoremstyle{question}
\newtheorem{que}[thm]{Question}
\theoremstyle{Conjecture}
\numberwithin{equation}{section}
\begin{document}

\title[on the norm of the centralizers of a group]{on the norm of the centralizers of a group}%
\author{Mohammad Zarrin}%

\address{Department of Mathematics, University of Kurdistan, P.O. Box: 416, Sanandaj, Iran}%
 \email{m.zarrin@uok.ac.ir, zarrin@ipm.ir}
 %\thanks{}%
%\subjclass{20D99, 20E07}%
%\keywords{centralizer; n-centralizer group; simple group.}%
%\date{}%
%\dedicatory{{\rm } }
%\commby{}%
% ----------------------------------------------------------------
\begin{abstract}For any group $G$, let $C(G)$ denote the intersection of the normalizers of centralizers of all
elements of $G$. Set $C_0= 1$. Define
$C_{i+1}(G)/C_i(G)=C(G/C_i(G))$ for $i\geq 0$. By $C_{\infty}(G)$
denote the terminal term of the ascending series. In this paper,
we show that a finitely generated group $G$ is nilpotent
 if and only if $G = C_{n}(G)$ for some positive integer $n$. \\\\
 {\bf Keywords}.
 Norm; Centralizers; Baer groups; Engel groups.\\
{\bf Mathematics Subject Classification (2000)}. 20E34, 20F45.
\end{abstract}
\maketitle
% ----------------------------------------------------------------

\section{\textbf{ Introduction and results}}
For any group $G$, the norm $B_1(G)$ of $G$ is the intersection of
all the normalizers of subgroups of $G$ (in fact, $B_1(G)$ is the
intersection of all the normalizers of non-$1$-subnormal subgroups
of $G$). This concept was introduced by R. Baer in 1934 and was
investigated by many authors, for example, see \cite{Bae2,BHN,Sch}. It is
well-known \cite{Sch} that $Z(G)\leq B_1(G)\leq Z_2(G)$. More recently in \cite{zar} it has been generalized and
showed that the intersection of all the normalizers of
non-$n$-subnormal subgroups of $G$, say $B_n(G)$ (with the
stipulation that $B_n(G) = G$ if all subgroups of G are
$n$-subnormal) is a nilpotent normal subgroup of $G$ of class
$\leq \mu(n)$, where $\mu(n)$ is the function of Roseblade's
Theorem.

 The author in \cite{zar1} showed, in view of the proof of the main theorem, that every group with finitely many $n$ of
 centralizers is nilpotent-by-(finite of order $(n-1)!$). That is,
 $$\mid G/\bigcap_{a\in G} N_G(C_G(a))\mid\leq (n-1)!.$$
 (See Theorem 2.2 of \cite{zar2} and also Theorem B of \cite{zar3}.) This result suggests that the
behavior of centralizers has a strong influence on the structure
of the group (for more information see \cite{zar4} and \cite{zar5}). This  is the main motivation to introduce a new series of norms in groups by their normalizers of the centralizers.
\begin{defn}
For any group $G$, we define the subgroup $C(G)$ to be the
intersection of the normalizers of the centralizers of $G$. That
is, $$C(G)=\bigcap_{a\in G} N_G(C_G(a)).$$ Clearly $B_1(G)\leq
C(G)$. Define the series
whose terms $C_i(G)$ are characteristic subgroups as follows:\\
$C_{i+1}(G)/C_i(G)=C(G/C_i(G))$ for $i\geq 0$. By $C_{\infty}(G)$
denote the terminal term of the ascending series.
\end{defn}
We say that a group $G$ is a $\overline{C}_n$-group
($\overline{C}_\infty$-group) if $C_n(G) = G$ for some $n\in
\mathbb{N}$ ($G=C_\infty(G)$, respectively).\\

We give a characterization for finitely generated nilpotent groups in terms of the subgroups $C_i(G)$ of $G$, as follows:\\

\noindent{\bf Theorem.} Let $G$ be a finitely generated group.
Then the following statement are equivalent:
\begin{enumerate}
 \item $G$ is nilpotent;
\item $G = C_{n}(G)$ for some positive integer $n$; \item
$G/C_{m}(G)$ is nilpotent for some positive integer $m$.
 \end{enumerate}

 \section{\textbf{Proof} }

For the proof of the main Theorem we need the following Lemmas.\\
An element $x$ of $G$ is called right $n$-Engel if $[x,_n y]=1$
for all $y\in G$, where $[x, y] = x^{-1}y^{-1}xy=x^{-1}x^y$ and
$[x,_{m+1} y] = [[x,_{m} y], y]$ for all positive integers $m$. We
denote by $R_n(G)$, the set of all right $n$-Engel elements of $G$
and for a given positive integer $n$, a group is called $n$-Engel
if $G=R_{n}(G)$.
\begin{lem}\label{l}
For any group $G$, the subgroup $C(G)$ is nilpotent of class $\leq
3$ and so it is soluble of class $\leq 2$.
\end{lem}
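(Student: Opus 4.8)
The plan is to show that every element of $C(G)$ is a right $2$-Engel element of $G$, deduce from this that $C(G)$ is itself a $2$-Engel group, and then invoke Levi's classical theorem that every $2$-Engel group is nilpotent of class at most $3$. So the whole proof reduces to translating the defining normalizer condition into an Engel condition.

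First I would unwind the definition. Fix $x \in C(G)$ and $a \in G$. By definition $x$ normalizes $C_G(a)$, and since $a \in C_G(a)$ this forces $a^x = x^{-1}ax \in x^{-1}C_G(a)x = C_G(a)$; hence $[a^x, a] = 1$. The next step is a short commutator calculation: writing $a^x = a\,[a,x]$ and using the identity $[uv,w] = [u,w]^v[v,w]$ together with $[a,a]=1$, one obtains $[a^x,a] = [[a,x],a] = [a,x,a]$. Finally, since $[a,x] = [x,a]^{-1}$ and an element commutes with $a$ if and only if its inverse does, the relation $[a,x,a]=1$ is equivalent to $[x,a,a]=1$. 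As $a \in G$ was arbitrary, this says precisely that $x \in R_2(G)$; that is, $C(G) \subseteq R_2(G)$.

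With this inclusion in hand the rest is immediate. The set $C(G)$ is a subgroup, being an intersection of the normalizers $N_G(C_G(a))$, and for any $u,v \in C(G)$ the element $u$ is right $2$-Engel in all of $G$ while $v \in G$, so $[u,v,v]=1$; thus $C(G)$ is a $2$-Engel group. By Levi's theorem it is then nilpotent of class at most $3$, which is the first assertion. For the second, a group of nilpotency class $\le 3$ satisfies $\gamma_4 = 1$, and since $[\gamma_2,\gamma_2] \le \gamma_4$ its derived subgroup is abelian, so it is soluble of derived length $\le 2$. The computations are routine once the right viewpoint is chosen, so the only place that genuinely needs care — and the conceptual heart of the argument — is the passage from the normalizer condition to the Engel condition, namely recognizing that $x$ fixing the centralizer $C_G(a)$ forces $a^x \in C_G(a)$ and hence $[x,a,a]=1$; after that the lemma is essentially Levi's theorem applied to the subgroup $C(G)$.
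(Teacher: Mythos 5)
Your proof is correct and takes essentially the same route as the paper: both show that the normalizer condition forces every element of $C(G)$ to be a right $2$-Engel element, conclude that $C(G)$ is a $2$-Engel group, and then invoke Levi's theorem that $2$-Engel groups are nilpotent of class at most $3$. The only difference is that you spell out the commutator computation (via $[a^x,a]=[a,x,a]$ and the passage to $[x,a,a]=1$) that the paper leaves implicit in the phrase ``it follows that $[x,_2 y]=1$.''
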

\begin{proof}
Let $x\in C(G)$. Then, by definition of $C(G)$, $C_G(y)^x=C_G(y)$,
for all $y\in G$. It follows that $[x,_2y]=1$, for all $y\in
C(G)$. That is, $C(G)$ is a $2$-Engel group. But it is well-known
that every $2$-Engel group is a nilpotent group of class at most
3, completing the proof.
\end{proof}
 \begin{rem}\label{r}
 Since $C(G)$ is a nilpotent group of class $\leq 3$, it is easy to see that every
  $\overline{C}_n$-group is a soluble group of class at most $2n$.
 \end{rem}
The converse of the above Remark is not true in general. For
example the symmetric group of degree $3$, $S_3$ is not a
$\overline{C}_1$-group.\\
Here we show that the class of $\overline{C}_1$-groups is closed
by subgroups. In fact, we have.
 \begin{lem}\label{l1}
For every subgroup $H$ of $G$, we have
 $$H\cap C(G)\leq C(H).$$
\end{lem}
\begin{proof}
We have $H\cap C(G)=H\cap (\bigcap_{a\in G}
N_G(C_G(a)))=\bigcap_{a\in G} (H\cap N_G(C_G(a)))=\bigcap_{a\in G}
N_H(C_G(a))\leq \bigcap_{a\in H} N_H(C_G(a))\leq \bigcap_{a\in H}
N_H(C_H(a))=C(H)$ which is our assertion.
 \end{proof}
   We denote by $Z_{i}(G)$ is
$i$-term of the ascending central series of
  $G$. Here we give a very close connection between this series and the upper central
  series.
\begin{lem}\label{l2}
For any group $G$, we have $$Z_{i+1}(G)\leq C_{i}(G)\subseteq
R_{2i}(G).$$
\end{lem}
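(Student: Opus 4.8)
The plan is to establish the two containments $Z_{i+1}(G)\le C_i(G)$ and $C_i(G)\subseteq R_{2i}(G)$ separately, each by induction on $i$, with the base data supplied by the argument behind Lemma \ref{l}.

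For the right-Engel containment $C_i(G)\subseteq R_{2i}(G)$, I would first sharpen Lemma \ref{l}: for $x\in C(G)$ and \emph{any} $y\in G$ (not merely $y\in C(G)$), since $x$ normalizes $C_G(y)$ and $y\in C_G(y)$, we get $y^x\in C_G(y)$, i.e. $[y,x,y]=1$; because $[x,y]=[y,x]^{-1}$ this yields $[x,y,y]=1$, so $C(G)\subseteq R_2(G)$. This is the case $i=1$. For the inductive step I would pass to $\bar G=G/C_{i-1}(G)$: by definition $C_i(G)/C_{i-1}(G)=C(\bar G)\subseteq R_2(\bar G)$, so for $x\in C_i(G)$ and all $y\in G$ one has $[x,y,y]\in C_{i-1}(G)$. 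The induction hypothesis $C_{i-1}(G)\subseteq R_{2(i-1)}(G)$ then gives $[[x,y,y],_{2(i-1)}y]=1$, i.e. $[x,_{2i}y]=1$, so $x\in R_{2i}(G)$.

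For the central containment $Z_{i+1}(G)\le C_i(G)$, the base case $i=1$ is a direct commutator computation: if $x\in Z_2(G)$ and $c\in C_G(a)$, then $c^x=c\,[c,x]$ with $[c,x]\in Z(G)$, whence $[c^x,a]=[c,a]^{[c,x]}=1$; thus $x$ normalizes every centralizer and $Z_2(G)\le C(G)=C_1(G)$. For $i\ge 2$ I would again use $\bar G=G/C_{i-1}(G)$. By the induction hypothesis $Z_i(G)\le C_{i-1}(G)$, so the natural map $G/Z_i(G)\to\bar G$ is a well-defined surjection; since the image of a center under a surjection is central, the image of $Z_{i+1}(G)/Z_i(G)=Z(G/Z_i(G))$ in $\bar G$ lies in $Z(\bar G)$. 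Finally $Z(\bar G)\le C(\bar G)=C_i(G)/C_{i-1}(G)$ (central elements normalize all centralizers), so pulling back gives $Z_{i+1}(G)\le C_i(G)$.

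The main obstacle I expect is the bookkeeping in this last induction: one must mod out by $C_{i-1}(G)$ rather than by $Z_i(G)$, and then route the image of $Z_{i+1}(G)$ through the factor surjection $G/Z_i(G)\to G/C_{i-1}(G)$ to land it inside $Z(\bar G)$ and hence inside $C(\bar G)$. Care is also needed at $i=1$, where the uniform inductive step degenerates (it would require $Z_1(G)\le C_0(G)=1$) and the base cases must instead be supplied by the explicit computations above.
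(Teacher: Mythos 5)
Your proposal is correct and takes essentially the same route as the paper: induction on $i$ for each containment separately, with the base cases supplied by the normalizer-of-centralizers computation (the sharpening of Lemma \ref{l} to arbitrary $y\in G$) and both inductive steps carried out in the quotient $G/C_{i-1}(G)$. The only difference is cosmetic: for the central containment the paper argues elementwise that $\bar{y}^{\bar{x}}=\bar{y}$ in $G/C_{i-1}(G)$ (so $\bar{x}$ fixes every centralizer there), which is just an unpacked version of your observation that the image of $Z_{i+1}(G)$ in that quotient is central and hence lies in $C(G/C_{i-1}(G))$.
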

\begin{proof}
 We let
$C_i=C_{i}(G)$, and proceed by induction on $i$. First we show
that $Z_{i+1}(G)\leq C_{i}(G)$. It is clear, if $i =1$. Assume
that $x\in Z_{i+1}(G)$. So $[x,y]\in Z_{i}(G)$ for all $y\in G$
and so, by the induction hypothesis, $[x,y]\in C_{i-1}$. It
follows that $y^x=yt$ for some $t\in C_{i-1}$ and therefore
$$C_{G/C_{i-1}}(y^xC_{i-1})=C_{G/C_{i-1}}(yC_{i-1}).$$
Which implies that $x\in C_i(G)$. Hence $Z_{i+1}(G)\leq
C_{i}(G)$.\\
Now we show that $C_{i}(G)\subseteq R_{2i}(G)$. Again, it is
clear, if $i =1$. Assume that $x\in C_{i+1}(G)$. It follows that
$[x,_2y]\in C_{i}(G)$ for all $y\in G$. So, by the induction
hypothesis, $[x,_2y]\in R_{2i}(G)$ and so $x\in R_{2i+2}(G)$, and
this completes the proof.
 \end{proof}
 We note that it is not true in general that
$Z_{i+1}(G)=C_i(G)$. For
 instance, if $G$ is the dihedral group of size 32, then $Z_3(G)<C_2(G)=Z_{4}(G)=G$. In fact, we have the following Lemma.
 \begin{lem}\label{l5}
Let $G$ be a dihedral group of degree $n$, $D_{n}$. Then
$$C_i(G)=Z_{2i}(G),$$ for any $i\geq 0$.
\end{lem}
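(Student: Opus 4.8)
The plan is to reduce everything to the single computation of the first term $C(D_N) = C_1(D_N)$ for an arbitrary dihedral group, and then propagate it up the series by induction. Throughout I write $D_n = \langle r, s \mid r^n = s^2 = 1,\ s^{-1}rs = r^{-1}\rangle$ and factor $n = 2^k m$ with $m$ odd. The backbone is the classical description of the upper central series: it climbs through the rotation subgroups as $Z_j(D_n) = \langle r^{n/2^j}\rangle$, stabilizing at $\langle r^m\rangle$ when $m \ge 3$ and jumping from $\langle r^2\rangle$ directly to $G$ at the final step when $m = 1$ (since $D_2 \cong C_2 \times C_2$ is abelian), together with the fact that $D_n/Z_j(D_n) \cong D_{n/2^j}$ whenever $Z_j$ is a proper rotation subgroup. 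I would first record these facts by a short induction using $D_n/Z(D_n) \cong D_{n/2}$ for $n$ even, since they let me identify every quotient arising below as again a dihedral group.

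The heart of the argument is the base case $C(D_N) = Z_2(D_N)$ for every $N$. Here I would compute all centralizers explicitly: a rotation $r^j$ has $C_G(r^j) = \langle r\rangle$ unless $r^j$ is central, while a reflection $sr^l$ has centralizer $\{1, sr^l\}$ when $N$ is odd and the Klein four-group $V_l = \{1, r^{N/2}, sr^l, sr^{l+N/2}\}$ when $N$ is even. Since $\langle r\rangle$ and $G$ are normal, the only normalizers left to compute are those of the reflection centralizers, so $C(D_N) = \bigcap_l N_G(C_G(sr^l))$. A direct conjugation calculation gives $N_G(\{1, sr^l\}) = \{1, sr^l\}$ for $N$ odd, while for $N$ even one finds $N_G(V_l) = V_l$ when $N \equiv 2 \pmod 4$ and $N_G(V_l)$ of order $8$ (picking up the rotations $r^{\pm N/4}$ and two further reflections) when $4 \mid N$. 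Intersecting over all $l$ then yields $C(D_N) = 1$, $C(D_N) = \langle r^{N/2}\rangle$, and $C(D_N) = \langle r^{N/4}\rangle$ respectively, together with the degenerate value $C(D_4) = D_4$; in each case this equals $Z_2(D_N)$.

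With the base case in hand I would run the induction on $i$ using the defining relation $C_{i+1}(G)/C_i(G) = C(G/C_i(G))$ together with the standard identity $Z_j(G/Z_i(G)) = Z_{i+j}(G)/Z_i(G)$. Assuming $C_i(D_n) = Z_{2i}(D_n)$, the quotient $D_n/C_i(D_n) = D_n/Z_{2i}(D_n)$ is again a dihedral group (of degree $n/2^{2i}$, or trivial once the series has stabilized), so the base case gives $C(D_n/C_i) = Z_2(D_n/C_i) = Z_{2i+2}(D_n)/Z_{2i}(D_n)$; lifting this equality through the quotient yields $C_{i+1}(D_n) = Z_{2i+2}(D_n)$, as required. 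The boundary cases — where $n/2^{2i}$ becomes odd, or equals $4$ (so the quotient is $D_2$ and the series jumps straight to $G$) — are absorbed automatically, because the base-case values $C(D_{\mathrm{odd}}) = 1$ and $C(D_4) = D_4$ are exactly what the upper central series does at those stages.

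The main obstacle is the base-case computation, specifically the normalizers $N_G(V_l)$ of the Klein four centralizers of reflections when $4 \mid N$: extra rotations and reflections enter the normalizer, so one must track the congruences modulo $N/4$ carefully and verify that intersecting over all $l$ collapses the reflection contributions while retaining $\langle r^{N/4}\rangle$. The only other delicate point is the degenerate small-degree behaviour ($N \le 4$), which is responsible for the terminal jump in the series and must be checked by hand rather than read off the generic formula.
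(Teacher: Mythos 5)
Your proposal is correct, and its skeleton is the same as the paper's: establish the base case $C(D_N)=Z_2(D_N)$ for every $N$, then climb the series by induction using the facts that each quotient $D_n/Z_{2i}(D_n)$ is again dihedral (or trivial) and that $Z_j(G/Z_i(G))=Z_{i+j}(G)/Z_i(G)$. The difference lies in how the base case is closed, and here your version is genuinely better. The paper never computes $C(D_N)$ as a subgroup: writing $n=2^{\alpha}m$ with $m$ odd, it asserts as ``easy to see'' a table of orders ($|C_1|=1$ for $\alpha\le 1$, $|C_1|=2$ for $\alpha=2$, $|C_1|=4$ for $\alpha\ge 3$) and then invokes Lemma \ref{l2} (which gives $Z_2(G)\le C_1(G)$) to force equality by comparing sizes. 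Your argument needs no appeal to Lemma \ref{l2}, since you identify $C(D_N)$ explicitly as $1$, $\langle r^{N/2}\rangle$, $\langle r^{N/4}\rangle$, or (degenerately) the whole group. This self-containment matters more than usual, because the paper's table is in fact wrong if $n$ denotes the degree as in the statement of the lemma: for example $C(D_6)=\{1,r^3\}$ has order $2$, not $1$, and $C(D_{12})=\langle r^3\rangle$ has order $4$, not $2$; even under the charitable reading that the table's $n$ is the group order, the degenerate groups $D_1$, $D_2$, $D_4$ (where $C_1$ is the whole group) violate it. Your explicit values are the correct ones, and since they coincide with $Z_2(D_N)$ case by case, the lemma survives; the paper's shortcut via Lemma \ref{l2} would also survive once the table is repaired, and buys a shorter computation (one only needs an upper bound on $C_1$, not its exact reflection-by-reflection structure). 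One cosmetic slip on your side: when $n/2^{2i}=4$ the quotient $D_n/C_i(D_n)$ is $D_4$, not $D_2$; what you mean, and what you then correctly use, is that $D_4/Z(D_4)\cong D_2$ is abelian, so $Z_2(D_4)=D_4=C(D_4)$ and the two series jump to the top together.
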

\begin{proof}
Suppose that  $n=2^{\alpha}m$, where $(2,m)=1, \alpha\geq 0$. It
is easy to see that
$$|C_1(D)|=\left\{%
\begin{array}{ll}
    1 & \hbox{$\alpha \leq 1$;} \\
    2 & \hbox{$\alpha =2 $;} \\
    4 & \hbox{$\alpha \geq 3$.} \\
\end{array}%
\right.$$ It follows, by Lemma \ref{l2}, that $Z_2(G)\leq C_1(G)$
and so $Z_2(G)=C_1(G)$. Hence $C_i(G)=Z_{2i}(G)$ (note that
$Z_j(G)/Z_i(G)=Z_{i+j}(G)/Z_i(G)$ for any $i, j\geq 0$.)
 \end{proof}

The class of nilpotent groups is not closed under forming extensions.
However, we have the following well-known result, due to P. Hall
(this result is often very useful for proving that a group is nilpotent).\\

\noindent{\bf{Theorem (P. Hall).}}
Let $N$ be a normal subgroup of a group $G$. If $G/N'$ and $N$
are nilpotent, then $G$ is nilpotent.

Here we show that the following statement (note that the subgroup $C(G)$ is nilpotent).
\begin{lem}\label{l3}
For any finitely generated group $G$, we have  $$G/C(G) \text
{~~is~~ nilpotent}\Longleftrightarrow G\text {~~is~~ nilpotent}.$$
\end{lem}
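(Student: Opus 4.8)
The plan is to dispatch the easy implication immediately and then reduce the substantive direction to a known Engel-theoretic criterion. If $G$ is nilpotent then so is its quotient $G/C(G)$, which gives the implication $(\Leftarrow)$. For the converse I would assume that $G/C(G)$ is nilpotent, say of class $c$, and aim to show that $G$ itself satisfies a \emph{bounded} Engel condition; finite generation together with solubility will then force nilpotency via Gruenberg's theorem.

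First I would record that $G$ is soluble: by Lemma \ref{l} the subgroup $C(G)$ is soluble (of derived length at most $2$), while $G/C(G)$ is nilpotent and hence soluble, so the extension $G$ is soluble. The heart of the argument is the Engel computation. Fix $g,x\in G$. Since $G/C(G)$ is nilpotent of class $c$, the weight-$(c+1)$ commutator $[g,_{c}x]$ lies in $\gamma_{c+1}(G)\le C(G)$. By Lemma \ref{l2} we have $C(G)=C_1(G)\subseteq R_2(G)$, so $h:=[g,_{c}x]$ is a right $2$-Engel element; taking the repeated commutator with $y=x$ gives $[h,_{2}x]=1$, that is $[g,_{c+2}x]=1$. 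As $g$ and $x$ were arbitrary, $G$ is a $(c+2)$-Engel group; in particular every element of $G$ is a left Engel element.

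Finally I would invoke the theorem of Gruenberg that a finitely generated soluble Engel group is nilpotent: $G$ is finitely generated by hypothesis, soluble by the first step, and Engel by the second, whence $G$ is nilpotent. The main obstacle is the passage from the pointwise information $C(G)\subseteq R_2(G)$ to a genuine bounded Engel condition on all of $G$, and here the nilpotency of $G/C(G)$ is exactly what is needed to ``absorb'' the outer $c$ commutators; once this is in place the finite-generation hypothesis does the remaining work through Gruenberg. I note that one could instead try to finish through P.\ Hall's theorem with $N=C(G)$ (which is nilpotent by Lemma \ref{l}), but this would require showing that $G/C(G)'$ is nilpotent, and establishing that appears to need the same Engel-plus-finite-generation input, so the route above seems the most economical.
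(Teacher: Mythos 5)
Your proof is correct, but it follows a genuinely different route from the paper's. The paper's argument never mentions Engel conditions or solubility: it observes that, since $G/C(G)$ is nilpotent, every subgroup of $G/C(G)$ is subnormal, so in particular $N_G(C_G(x))/C(G)$ is subnormal in $G/C(G)$ and hence $N_G(C_G(x))$ is subnormal in $G$; the chain $\langle x\rangle \unlhd C_G(x) \unlhd N_G(C_G(x)) \unlhd\unlhd G$ then shows that every cyclic subgroup of $G$ is subnormal, so $G$ is a Baer group, and a finitely generated Baer group is nilpotent because Baer groups are locally nilpotent. Your route instead extracts a bounded Engel condition, $[g,_{c+2}\,x]=1$ for all $g,x\in G$, from $\gamma_{c+1}(G)\leq C(G)\subseteq R_2(G)$ (Lemma \ref{l2} with $i=1$), adds solubility of $G$ from Lemma \ref{l}, and closes with Gruenberg's theorem on finitely generated soluble Engel groups; each of these steps is sound, and notably this is exactly the strategy the paper reserves for its main Theorem, where $\overline{C}_n$-groups are shown to be $2n$-Engel and soluble before \cite{Gru} is invoked. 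Both arguments rest on a nontrivial local-nilpotency theorem (Baer's theorem for the paper, Gruenberg's for you). What yours buys is quantitative information, namely an explicit Engel length $c+2$ in terms of the nilpotency class $c$ of $G/C(G)$; what the paper's buys is economy, since it needs neither Lemma \ref{l}, nor Lemma \ref{l2}, nor solubility, nor the value of $c$, only the elementary fact that subgroups of nilpotent groups are subnormal.
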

\begin{proof}
Let $G/C(G)$ is a finitely generated nilpotent group and $x\in G$.
By definition of $C(G)$, $N_G(C_G(x))/C(G)$ is a subgroup of
$G/C(G)$ and so, as $G/C(G)$ is nilpotent, it is subnormal
subgroup of $G/C(G)$. It follows that  $N_G(C_G(x))$ is subnormal
subgroup of $G$, written $N_G(C_G(x))\unlhd\unlhd G$. Hence
$$\langle x\rangle\unlhd C_G(x)\unlhd N_G(C_G(x))\unlhd\unlhd G.$$
Therefore $\langle x\rangle\unlhd \unlhd G$. That is, every cyclic
subgroup of $G$ is a subnormal subgroup of $G$. So $G$ is a
finitely generated Baer group, where a group $G$ is said to be Baer if for every $x\in G$ the cyclic subgroup
$\langle x\rangle$ is subnormal in $G$. But it is well-known that Baer
groups are locally nilpotent. Hence $G$ is a nilpotent group and
this completes the proof.
 \end{proof}
  \begin{lem}\label{l4}
Let $H$ is a subgroup of finitely generated group $G$. Then we
have
$$H/C_i(G) \text {~~is~~ nilpotent}\Longleftrightarrow H\text
{~~is~~ nilpotent}.$$
\end{lem}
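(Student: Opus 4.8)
The plan is to prove the nontrivial (forward) implication by induction on $i$, peeling off one term of the series $C_\bullet(G)$ at each stage and invoking Lemma \ref{l3}. The reverse implication is immediate: since $C_i(G)$ is characteristic in $G$ it is normal in the overgroup $H$, so $H/C_i(G)$ is a quotient of $H$, and quotients of nilpotent groups are nilpotent. For the forward direction I would take $i=0$ as the trivial base case, where $C_0(G)=1$ gives $H/C_0(G)=H$.

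For the inductive step, assume the statement for $i-1$ and suppose $H/C_i(G)$ is nilpotent. Let a bar denote reduction modulo $C_{i-1}(G)$, so that $\bar G=G/C_{i-1}(G)$ and $\bar H=H/C_{i-1}(G)\le\bar G$, and by the very definition of the series $C(\bar G)=C_i(G)/C_{i-1}(G)$. Since forming $H/C_i(G)$ presupposes $C_i(G)\le H$, we get $C(\bar G)\le\bar H$ and $\bar H/C(\bar G)=H/C_i(G)$ is nilpotent. Applying Lemma \ref{l1} to the pair $\bar H\le\bar G$ yields $\bar H\cap C(\bar G)\le C(\bar H)$, which reads $C(\bar G)\le C(\bar H)$ because $C(\bar G)\le\bar H$. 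Hence $\bar H/C(\bar H)$ is a homomorphic image of the nilpotent group $\bar H/C(\bar G)$, so it too is nilpotent. Lemma \ref{l3} applied to $\bar H$ then shows $\bar H=H/C_{i-1}(G)$ is nilpotent, and the induction hypothesis at level $i-1$ (for the same $H$) gives that $H$ is nilpotent.

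The step requiring the most care, and the main obstacle, is the appeal to Lemma \ref{l3} for $\bar H$: that lemma rests on the fact that \emph{every finitely generated Baer group is nilpotent}, and so it is only available when $\bar H$ is finitely generated. I must therefore guarantee that $H$, and hence its quotient $\bar H$, is finitely generated. For the case $H=G$ used in the main Theorem this is automatic, since $G$ is finitely generated and each $\bar G=G/C_{i-1}(G)$ is a quotient of it; in general I would read the hypothesis as applying to a finitely generated subgroup $H$ with $C_i(G)\le H$. It is worth flagging that finite generation is genuinely essential here: the argument above only ever produces local nilpotence of $\bar H$ (via the Baer property) before Lemma \ref{l3} upgrades it to nilpotence, and a locally nilpotent group need not be nilpotent without finite generation.
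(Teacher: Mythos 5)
Your proof is correct and follows essentially the same route as the paper's: induction on $i$, using Lemma \ref{l1} to get $C(\bar{G})\le C(\bar{H})$ so that $\bar{H}/C(\bar{H})$ is a quotient of the nilpotent group $\bar{H}/C(\bar{G})$, then Lemma \ref{l3} followed by the induction hypothesis. Your flag about finite generation is well taken: the paper's proof (like yours) needs $\bar{H}$ to be finitely generated in order to invoke Lemma \ref{l3}, a point the paper glosses over, and your reading of the hypothesis as ``$H$ finitely generated'' (automatic in the main Theorem, where $H=G$) is the right repair.
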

\begin{proof}
We argue by induction on $i$. Let $H/C(G)$ is a finitely generated
nilpotent group. Then according to Lemma \ref{l1}, $C(G)=C(G)\cap
H\leq C(H)$ and so
$$H/C(H)\cong (H/C(G))/(C(H)/C(G)).$$ From which it follows that
$H/C(H)$ is nilpotent and so, by Lemma \ref{l3},  $H$ is
nilpotent. Now assume that $i>1$ and $H/C_{i+1}(G)$ is a nilpotent
group. In this case we have $$H/C_{i+1}(G)\cong
(H/C_i(G))/(C_{i+1}(G)/C_i(G)).$$ Applying the induction
hypothesis (note that $C_{i+1}(G)/C_i(G)=C(G/C_i(G))$ and
$H/C_i(G)\leq G/C_i(G)$), we thus conclude that $H/C_i(G)$ is
nilpotent and so, again by the induction hypothesis, $H$ is
nilpotent, completing the proof.
 \end{proof}
We can now deduce the main Theorem.\\

\noindent{\bf{Proof of the Theorem.}} According to Lemma
\ref{l2}, every nilpotent group of class $n+1$ is a
$\overline{C}_n$-group. Now assume that $G$ is a finitely
generated $\overline{C}_n$-group. According to Lemma \ref{l2} and
Remark \ref{r}, we conclude that $G$ is finitely generated
$2n$-Engel soluble group, so it is well-known (see \cite{Gru})
that $G$ is
nilpotent. Finally Lemma \ref{l4} completes the proof.\\

\begin{rem}
In view of Lemma \ref{l2}, one can see that every nilpotent group
(note necessarily finitely generated) of class $n+1$ is a
$\overline{C}_n$-group.
\end{rem}

Finally, we state the following Question.
\begin{que}
Is the nilpotency class of every nilpotent $\overline{C}_n$-group bounded
by $n$?
\end{que}


\begin{thebibliography}{99}
\bibitem{Bae2} R. Baer, Norm and hypernorm, Publ. Math. Debrecen \textbf{4} (1956) 347-356.
\bibitem{BHN} J.C. Beidleman, H. Heineken and M. Newell, Center and norm, Bull. Austral. Math. Soc. \textbf{69}
(3) (2004) 457-464.
\bibitem{Gru} K. W. Gruenberg, Two theorems on Engel groups, Proc. Cam.
Phil. Soc. \textbf{49} (1953) 377-380.
\bibitem{Sch} E. Schenkman, On the norm of a group, Illinois J. Math. \textbf{4} (1960) 150-152.
\bibitem{zar1} M. Zarrin, Criteria for the  solubility  of finite groups by its centralizers,  Arch. Math. (Basel) \textbf{96} (2011) 225-226.
\bibitem{zar2} M. Zarrin, On solubility of groups with finitely many centralizers, Bull. Iran. Math. Soc. \textbf{39} No. 3 (2013) 517-521.
\bibitem{zar3} M. Zarrin, Derived length and centralizers of groups, J. Algebra Appl. \textbf{14}, No. 8 (2015) 1550133 (4 pages).
\bibitem{zar4} M. Zarrin, On non-commuting sets and centralisers in infinite group, Bull. Austral. Math. Soc. to appear.
\bibitem{zar5} M. Zarrin, On element-centralizers in finite groups, Arch. Math. (Basel) \textbf{93} (2009) 497-503.
\bibitem{zar} M. Zarrin, Non-subnormal subgroups of groups, J. Pure Appl. Algebra, \textbf{217} (2013) 851-853.
\end{thebibliography}
\end{document}